\documentclass[11pt,letterpaper]{amsart}
\usepackage{amsfonts,amsmath,amsthm,amssymb}

\usepackage{graphicx}
\usepackage{setspace}
\usepackage{thmtools}

\usepackage{dsfont}
\usepackage{grffile}

\usepackage{xcolor}
\usepackage{hyperref}
\hypersetup{
    colorlinks = true,
    linkcolor = {blue}, urlcolor={blue}
}

\definecolor{c1}{rgb}{0,0,1} 
\hypersetup{
    linkcolor= {c1}, 
    citecolor={c1}, 
    urlcolor={c1} 
}

\RequirePackage[numbers]{natbib}

\usepackage [english]{babel}
\usepackage [autostyle, english = american]{csquotes}
\MakeOuterQuote{"}

\newtheorem{thm}{Theorem}
\newtheorem{cor}{Corollary}
\newtheorem{lemma}{Lemma}

\newtheorem{prop}{Proposition}

\numberwithin{equation}{section}
\numberwithin{thm}{section}
\numberwithin{lemma}{section}
\numberwithin{cor}{section}
\numberwithin{prop}{section}

\begin{document}

{
  \title{\bf Zeta zero dependence and the critical line}
  \author{Gordon Chavez}

\date{}
  \maketitle
}

\begin{abstract}
On the critical line the conditional distribution of the zeta function's magnitude around zeta zeros exists and predicts the well-known pair correlation between nontrivial zeta zeros. However, this conditional distribution does not exist at most distances above or below any nontrivial zeta zeros that are off the critical line. This shows that the zeta function's magnitude cannot have vertical statistical structure at most distances around nontrivial zeta zeros off the critical line. 
The proofs of these results are straightforward, using only statistical properties of certain prime sums, elementary properties of normal and elliptical random variables, and the pole structure of the zeta function. These results readily generalize to L-functions.
\end{abstract}

MSC 2020 subject classification: Primary 11M06, 11M26; secondary 60F05.
Keywords: Riemann zeta function, L-function, nontrivial zeros, conditional distribution.

\tableofcontents

\section{Introduction}

The Riemann zeta function $\zeta(s)$ may be represented by the following sum over integers $n$ or product over prime numbers $p$ for $\textnormal{Re}(s)>1$:
\begin{equation}
\zeta(s)=\sum_{n=1}^{\infty}\frac{1}{n^{s}}=\prod_{p}\left(1-\frac{1}{p^{s}}\right)^{-1} \label{zeta}
\end{equation}
The former representation is called the Dirichlet series, while the latter representation is called the Euler product. $\zeta(s)$ may be extended by analytic continuation to all $s$ in $\mathbb{C}$ except $s=1$, as $\zeta(1)$ is a simple pole. Values of $s$ such that $\textnormal{Im}(s)>0$ and $\zeta(s)=0$, the nontrivial zeta zeros, are of great importance for number theory, as they are directly relevant for understanding the distribution of the prime numbers. The proof or disproof of the famous Riemann hypothesis (RH), which asserts that all of the zeta function's nontrivial zeros reside on the critical line, $\textnormal{Re}(s)=1/2$, is likely the most important open problem in number theory.

The importance of the critical line has led to a substantial amount of research on the behavior of the zeta function on this line. This research includes studies of the distribution of the zeta function's nonzero values on the critical line and the distribution of its zeros on the critical line. A foundational result in the former area is Selberg's central limit theorem \cite{selberg} \cite{radsound}, which states that, for $t$ uniformly distributed in $[T,2T]$ with $T\rightarrow \infty$,
\begin{equation}
\log\left|\zeta\left(\frac{1}{2}+it\right)\right| \overset{d} \to \mathcal{N}\left(0, \frac{1}{2}\log\log T\right). \label{selberg clt}
\end{equation}
That is, $\log\left|\zeta(1/2+it)\right|$ converges to a normally distributed random variable with mean zero and variance $\frac{1}{2}\log\log T$ under $t$ uniformly distributed in $[T,2T]$. 

A similarly foundational discovery regarding the distribution of the zeta zeros on the critical line was made by Montgomery \cite{montgomery}, who gave the first results describing pair-correlation between zeros, assuming RH. His results predicted a short-range repulsion between consecutive zeros on the critical line. Odlyzko \cite{odlyzko} later calculated large numbers of zeta zeros and empirically corroborated Montgomery's predictions. Dyson \cite{dyson} famously pointed out that Montgomery's result was equivalent to that for the pair-correlation of eigenvalues of random unitary matrices. This connection with random matrix theory has been extremely useful for understanding statistical dependence in zeros of the zeta function \cite{snaith} and its generalizations  \cite{hejhal} \cite{rudnicksarnak94} \cite{rudnicksarnak96} \cite{katzsarnak}. This connection has also found many other applications, e.g., by Keating \& Snaith \cite{keating snaith}, who gave influential conjectures on the moments of the zeta function on the critical line, and by Fyodorov, Hiary, \& Keating (FHK) \cite{fhk} \cite{fk}, who gave a notable conjecture on the distribution of the zeta function's extreme values on the critical line.

Bourgade \cite{bourgade2010} provided an important extension to Selberg's and Montgomery's results by showing that, for $t$ uniformly distributed in $[T,2T]$, the covariance of $\log\left|\zeta\left(1/2+it\right)\right|$ and $\log\left|\zeta\left(1/2+i\left(t+\Delta\right)\right)\right|$ is 
$$\approx -\frac{1}{2}\log \left|\Delta\right|$$
for mesoscopic distances $\Delta$ where $1/\log T\ll \left|\Delta\right| \ll 1$, showing that \newline $\log\left|\zeta(1/2+it)\right|$ has logarithmic correlations over such distances. This result then confirmed suggestions by Coram \& Diaconis \cite{coram diaconis} that, along with the microscopic repulsion  ($\left|\Delta\right|<1/\log T$) between zeta zeros predicted by Montgomery, there is also a mesoscopic repulsion between zeros. The logarithmic correlation structure of $\log\left|\zeta(1/2+it)\right|$ was an important motivation for FHK's conjecture, and it served as an important link to the theory of branching random walks. The latter was applied by Arguin, Belius, \& Harper \cite{branch} to show that the leading terms of FHK's conjecture hold for a random model of the zeta function, applied by Arguin, Belius, Bourgade, Radziwill, \& Soundararajan \cite{maxshort} to verify the leading order of FHK's conjecture for the zeta function itself, by Arguin, Bourgade, \& Radziwill \cite{fhk 1} to prove the upper bound in FHK's conjecture, and by Arguin, Oimet, \& Radziwill \cite{shortgen} to provide new results describing the moments and maxima of the zeta function on the critical line, proving a related conjecture of Fyodorov \& Keating \cite{fk}, and generalizing the results in \cite{maxshort}. 

Random matrix theory is closely connected to the field of quantum chaos \cite{berry 77} \cite{bohigas et al}, and it was ideas from this field that first illuminated long-range or macroscopic ($\left|\Delta\right| \geq 1$) statistical dependence in the zeta zeros. In particular, Bogomolny \& Keating \cite{bk951} \cite{bk952} \cite{bk96} gave the first predictions for such dependence in the Riemann zeros by utilizing semi-classical techniques from the field of quantum chaos along with the Hardy-Littlewood conjecture from number theory. Their results describe an effect where differences between zeros tend to avoid the imaginary parts of the low-lying zeta zeros themselves. Conrey \& Snaith \cite{conreysnaith} showed similar results from a conjecture on the ratios of L-functions given by Conrey, Farmer, \& Zirnbauer \cite{cfz} and Rodgers \cite{rodgers} proved the effect under RH. Ford \& Zaharescu \cite{fordzah} then used uniform versions of Landau's formula \cite{landau} \cite{goneklandau} \cite{fordzah2} to give unconditional proof of the effect in zeta and L-function zeros on the critical line, explaining numerical observations of the effect in L-function zeros by Perez-Marco \cite{pm11}. 

The formal results above concerning the zeta zeros either only consider zeros on the critical line, assume RH, or include contributions from zeros off the critical line in an error term. A question then arises: would nontrivial zeta or L-function zeros off the critical line have the same vertical statistical dependence structure as that described by the results above? There is little research on this topic. Gonek \cite{gonek} applied a finite Euler product approximation of the zeta function \cite{gonekhugheskeating} \cite{gonek12} to predict that any such zeros off the critical line have a structurally different generating process from those on the critical line, which suggests the answer is no. In this paper we apply probabilistic methods similar to those used in much of the research described above as well as other, related work \cite{matsumoto} \cite {harper} \cite{largedev}, to show that such zeros likely would not have any vertical statistical structure.

On the critical line we can prove the following result describing the conditional distribution of $\log\left|\zeta\left(1/2+i\left(t+\Delta\right)\right)\right|$ when $\zeta\left(1/2+it\right)= 0$\footnote{For a random variable $X$ and event $Y$, we denote by $X|Y$ the random variable $X$ given that the event $Y$ has occurred.}:
\begin{thm}
If $t$ is uniformly distributed in $[T,2T]$ with $\Delta \in [T-t, 2T-t]$, then as $T\rightarrow \infty$,
\begin{align}
\left. \log \left|\zeta\left(\frac{1}{2}+i\left(t+\Delta\right)\right)\right| \middle \vert \zeta\left(\frac{1}{2}+it\right)=0 \right.\nonumber \\ \overset{d} \to \mathcal{N}\left(-\sum_{k=1}^{\infty}\frac{\textnormal{Re}\mathcal{P}\left(k\left(1+i\Delta\right)\right)}{k^{2}}, \frac{1}{2}\log\log T\right) \label{critical line}
\end{align}
where $\mathcal{P}(s)$ denotes the prime zeta function. \label{thm intro}
\end{thm}
This limit theorem predicts the statistical dependence between zeta zeros described above. In particular, it predicts an "average gap" around zeta zeros, which agrees with the zeros' known short-range dependence, and, since the conditional mean in (\ref{critical line}) has local maxima at values of $\Delta$ approximately equal to the Riemann zeros' imaginary parts, it predicts the long-range repulsion between zeta zero differences and the imaginary parts of the low-lying zeta zeros (see Figure \ref{fig0}). We will give further details on this result below. 

\begin{figure}
\centering
\includegraphics[width=\linewidth]{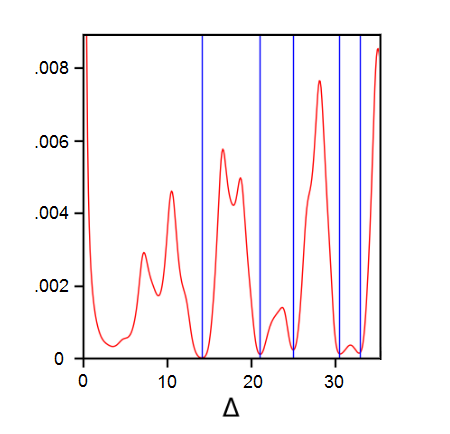}
\caption{A plot (Red) of the conditional probability $\mathbb{P}\left\{\log\left|\zeta\left(1/2+i\left(t+\Delta\right)\right)\right|\leq -3\sigma \middle \vert \zeta\left(1/2+it\right)=0\right\}
$ approximated using the distribution described by (\ref{critical line}) with $\sigma=\frac{1}{2}\log\log t$, where $t$ is the imaginary part of the 100,000th Riemann zero. Vertical lines (Blue) are positioned at values of $\Delta$ such that $\zeta(1/2+i\Delta)=0$. Since large negative values of $\log\left|\zeta(s)\right|$ are a necessary condition for $s$ to be the position of a zeta zero, this result demonstrates the statistical dependence between zeta zeros on the critical line described above.}
\label{fig0}
\end{figure}

Perhaps most notably, we show that statistical dependence like that described by (\ref{critical line}) for zeta zeros on the critical line cannot exist for zeros that are off the critical line. In particular, for $\sigma>1/2$, we show that the conditional distribution of $\log\left|\zeta\left(\sigma+i\left(t+\Delta\right)\right)\right|$ does not exist when $\zeta\left(\sigma+it\right)= 0$ for most values of $\Delta$, i.e., the conditional moments are infinite and the conditional probability density function is identically zero. This shows that the zeta function's magnitude cannot have vertical statistical structure at most distances around nontrivial zeta zeros that are off the critical line. 

These results can be proven relatively easily, using only statistical properties of certain prime sums, elementary properties of normal and elliptical random variables, and the pole structure of the zeta function. For several related results we additionally apply elementary properties of generalized hyperbolic random variables. All of these results readily generalize to L-functions.

\section{Preliminaries}
\label{prelims}

We first describe the prime zeta function, $\mathcal{P}\left(s\right)$, which has the expressions
\begin{equation}
\mathcal{P}\left(s\right)=\sum_{p}\frac{1}{p^{s}} \label{prime zeta sum}
\end{equation}
and 
\begin{equation}
\mathcal{P}\left(s\right)=\sum_{n=1}^{\infty}\frac{\mu(n)}{n}\log\zeta\left(ns\right),\label{prime zeta ac}
\end{equation}
where 
\begin{equation}
\mu(n)=\begin{cases} 0; & \textnormal{$n$ has a repeated prime factor} \\ (-1)^{\omega(n)}; & \textnormal{$n$ has $\omega(n)$ distinct prime factors} \end{cases}  \label{mobius defn}
\end{equation}
is the well-known M{\"o}bius function. The sum over primes (\ref{prime zeta sum}) is absolutely convergent for $\textnormal{Re}(s)>1$ and the analytic continuation (\ref{prime zeta ac}), which is derived using the technique of M{\"o}bius inversion, is convergent everywhere in $\textnormal{Re}(s)>0$ except at the points $ns=1$, which arise from the simple pole $\zeta(1)$, and at the points $ns$ where $\zeta\left(ns\right)=0$ \cite{glaisher} \cite{froberg}. Note that, since $\zeta(s)$ has no zeros in $\textnormal{Re}(s)\geq 1$, the singularities of (\ref{prime zeta ac}) in $1/2 \leq \textnormal{Re}(s)<1$ with $\textnormal{Im}(s) \neq 0$ unambiguously define the positions of the nontrivial zeta zeros. 

We make some important notes about the statistical properties of (\ref{prime zeta sum}). Under $t$ uniformly distributed in $[a,b]$ with $b-a\rightarrow \infty$, the summands of (\ref{prime zeta sum})'s $\textnormal{Re}\mathcal{P}(\sigma+it)$ are statistically independent\footnote{See Appendix \ref{app independence} for sketch of proof.}. This is directly related to the fact that, due to the fundamental theorem of arithmetic, the characteristic function of (\ref{prime zeta sum})'s $\textnormal{Re}\mathcal{P}(\sigma+it)$ has the factorizable form
\begin{equation}
\varphi\left(\lambda\right)=\prod_{p}J_{0}\left(\frac{\lambda}{p^{\sigma}}\right)=\prod_{p}\varphi_{p}\left(\lambda\right), \label{chf0}
\end{equation}
where $\varphi_{p}\left(\lambda\right)$ here denotes the characteristic function of $\cos\left(t\log p\right)/p^{\sigma}$ and
\begin{equation}
J_{0}(z)=\sum_{m=0}^{\infty}\frac{\left(-1\right)^{m}}{\left(m!\right)^{2}}\left(\frac{z}{2}\right)^{2m} \label{bessel0}
\end{equation}
is the 0th order Bessel function of the first kind \cite{laurincikas}. We apply this statistical independence property first here by multiplying $\textnormal{Re}\mathcal{P}\left(k\left(\sigma+i(t+\Delta)\right)\right)$ and $\textnormal{Re}\mathcal{P}\left(j\left(\sigma+it\right)\right)$ and applying the expected value\footnote{For $t$ uniformly distributed in $[a,b]$, the expected value of a given function $f(.)$ is defined $E\left\{f\left(t\right)\right\}=\frac{1}{b-a}\int_{a}^{b}f\left(x\right)dx$.} to give the following useful result describing the covariance of $\textnormal{Re}\mathcal{P}\left(j\left(\sigma+it\right)\right)$ and $\textnormal{Re}\mathcal{P}\left(k\left(\sigma+i(t+\Delta)\right)\right)$:

\begin{lemma}
If $t$ is uniformly distributed in $[T,2T]$, then as $T\rightarrow \infty$, 
\begin{align}
E\left\{\textnormal{Re}\mathcal{P}\left(k\left(\sigma+i(t+\Delta)\right)\right)\textnormal{Re}\mathcal{P}\left(j\left(\sigma+it\right)\right)\right\} \rightarrow \begin{cases} \frac{1}{2}\textnormal{Re}\mathcal{P}\left(k(2\sigma+i\Delta)\right); &  k=j \\ 0; & k \neq j\end{cases} 
\label{covar general}
\end{align}
for $\Delta \in \mathbb{R}$ and $k,j \in \mathbb{N}$.
\label{cov gen lemma}
\end{lemma}
\begin{proof}
We begin by noting from the Fubini and Tonelli theorems that if
\begin{equation}
\sum_{n}\frac{1}{b-a}\int_{a}^{b}\left|f_{n}(t)\right|dt<\infty \label{ft cond}
\end{equation}
for general functions $f_{n}(t)$, then 
\begin{equation}
\frac{1}{b-a}\int_{a}^{b}\sum_{n}f_{n}(t)dt=\sum_{n}\frac{1}{b-a}\int_{a}^{b}f_{n}(t)dt. \label{ft}
\end{equation}
We then apply (\ref{prime zeta sum}) to note that, for any $t, \Delta \in \mathbb{R}$, $k\geq j \in \mathbb{N}$, and $j\sigma>1$,
\begin{align}
\textnormal{Re}\mathcal{P}\left(k\left(\sigma+i(t+\Delta)\right)\right)\textnormal{Re}\mathcal{P}\left(j\left(\sigma+it\right)\right)=\sum_{p}\frac{\cos\left(k(t+\Delta)\log p\right)\cos\left(jt\log p\right)}{p^{(k+j)\sigma}} \nonumber \\ +2\sum_{p}\frac{\cos\left(k(t+\Delta)\log p\right)}{p^{k\sigma}}\sum_{q<p}\frac{\cos\left(jt \log q\right)}{q^{j\sigma}}. \label{gen pz multiply}
\end{align}
We then note that the series on the right-hand side of (\ref{gen pz multiply}) satisfy the absolute convergence property
\begin{align}
\sum_{p}\frac{\left| \cos\left(k(t+\Delta)\log p\right)\cos\left(jt\log p\right)  \right|}{p^{(k+j)\sigma}} \nonumber \\ +2\sum_{p}\frac{\left| \cos\left(k(t+\Delta)\log p\right) \right|}{p^{k\sigma}}\sum_{q<p}\frac{\left| \cos\left(jt \log q\right) \right|}{q^{j\sigma}} \nonumber \\ 
\leq \sum_{p}\frac{1}{p^{(k+j)\sigma}}+2\sum_{p}\frac{1}{p^{k\sigma}}\sum_{q<p}\frac{1}{q^{j\sigma}} < \mathcal{P}((k+j)\sigma)+2\mathcal{P}(k\sigma)\mathcal{P}(j\sigma). \label{gen bound}
\end{align}
Note that (\ref{gen bound}) shows that (\ref{gen pz multiply}) satisfies (\ref{ft cond}). Therefore the expected value can be applied term-by-term to (\ref{gen pz multiply}).

We then apply the expected value with $a=T$, $b=2T$, and $T \rightarrow \infty$ and note that, by the statistical independence shown in Appendix \ref{app independence}, the expected value of the double-summation in (\ref{gen pz multiply}) always vanishes because $E\left\{\cos\left(\omega t+\theta\right)\right\}=0$ for all $\omega, \theta \in \mathbb{R}$. This latter vanishing property combined with the fact that
\begin{align}
\cos\left(k(t+\Delta)\log p\right)\cos\left(jt\log p\right) \nonumber \\ =\frac{1}{2}\left(\cos\left((k-j)t\log p+\Delta \log p\right)+\cos\left(\left((k+j)t \log p+\Delta \log p\right)\right)\right) \label{almost done ugh}
\end{align}
shows that the expected value of (\ref{gen pz multiply})'s first summation vanishes unless $k=j$. This gives
\begin{align}
E\left\{\textnormal{Re}\mathcal{P}\left(k\left(\sigma+i(t+\Delta)\right)\right)\textnormal{Re}\mathcal{P}\left(j\left(\sigma+it\right)\right)\right\} \rightarrow\begin{cases} \frac{1}{2}\sum_{p}\frac{\cos\left(k\Delta \log p\right)}{p^{2k\sigma}}; &  k=j \\ 0; & k \neq j\end{cases}
\label{pre covar general}
\end{align}
which completes the proof.
\end{proof}
Note that the prime sum in (\ref{pre covar general}) is convergent for $2k\sigma>1$, which is a much weaker constraint than $k\sigma>1$. Therefore applying the expected value as above serves as an \textit{analytic continuation} to a larger domain for $\sigma$. Furthermore, (\ref{covar general}) may be extended to the still larger domain in $\sigma>0$ allowed by the analytic continuation (\ref{prime zeta ac}). We lastly note that (\ref{covar general}) with $k=j=1$ gives the following result for $\textnormal{Re}\mathcal{P}\left(\sigma+it\right)$'s autocovariance function:
\begin{equation}
R_{\mathcal{P}}(\sigma,\Delta)=E\left\{\textnormal{Re}\mathcal{P}\left(\sigma+i(t+\Delta)\right)\textnormal{Re}\mathcal{P}\left(\sigma+it\right)\right\} \rightarrow \frac{1}{2}\textnormal{Re}\mathcal{P}\left(2\sigma+i\Delta\right).
 \label{cov gen}
\end{equation}

\subsection{Elliptical distributions}
We next note that (\ref{chf0}) is an even function since $J_{0}(x)$ is an even function. This implies that $\textnormal{Re}\mathcal{P}(\sigma+it)$'s probability density function $p(x)$ is even as well. Relatedly, it is clear from (\ref{chf0})-(\ref{bessel0}) that there exists a function $\phi(.)$ such that 
$$
\varphi(\lambda)=\phi\left(\lambda^{2}\right),
$$
which, with (\ref{cov gen}), shows that $\textnormal{Re}\mathcal{P}(\sigma+it)$ and $\textnormal{Re}\mathcal{P}(\sigma+i(t+\Delta))$ have an \textit{elliptical} joint distribution, which is a large class of distributions that includes the multivariate normal distribution. From the general results of Fang, Kotz, \& Ng \cite{kotz} (Theorem 2.18 pg. 45, Eqn. 2.43 pg. 46) the conditional probability density function of an elliptical random variable $X(t+\Delta)$ given the value of $X(t)$ has the form
\begin{equation}
p_{t+\Delta|t}(x)=\frac{1}{\sqrt{h(\Delta)}}g\left(\frac{\left(x-\mu_{t+\Delta|t}\right)^{2}}{h(\Delta)}\right) \label{cond dens gen}
\end{equation}
where $g(.)$ is a non-negative function, 
\begin{equation}
h(\Delta)=R(0)\left(1-\left(\frac{R(\Delta)}{R(0)}\right)^{2}\right), \label{variance factor}
\end{equation}
$R(\Delta)=E\left\{X(t+\Delta)X(t)\right\}$, $R(0)$ is thus the unconditional variance, and the conditional expectation\footnote{The conditional moments here are defined $E\left\{X^{m}(t+\Delta)|X(t)\right\}=\int_{-\infty}^{\infty}x^{m}p_{t+\Delta|t}(x)dx$.} is given by
\begin{equation}
\mu_{t+\Delta|t}=E\left\{X(t+\Delta)|X(t)\right\}=\frac{R(\Delta)}{R(0)}X(t). \label{gen cond mean}
\end{equation}
Additionally, from \cite{kotz} (Theorem 2.18 pg. 45), one can show that the conditional variance has the form 
\begin{equation}
\textnormal{var}\left\{X(t+\Delta)|X(t)\right\}=h\left(\Delta\right)V\left(X(t)\right), \label{gen cond var}
\end{equation}
where $V(.)$ is a function that depends on the specific distribution of $X(t)$ and $X(t+\Delta)$. In the case of $X(t)$ and $X(t+\Delta)$ normally distributed, $V\left(.\right)=1$. In general, however, this is not the case and (\ref{gen cond var}) is dependent on the value of $X(t)$. 

\subsection{Generalized hyperbolic distributions}
We next describe the tails of distributions of series with the form (\ref{prime zeta sum}) with $s=\sigma+it$ that additionally satisfy the convergence property 
\begin{equation}
\sum_{p}\frac{1}{p^{2\sigma}}<\infty, \label{sq conv}
\end{equation}
i.e. $\sigma>1/2$. In particular, if (\ref{prime zeta sum}) satisfies (\ref{sq conv}), then (\ref{prime zeta sum})'s probability density function $p(x)$ satisfies
\begin{equation}
p(x)=O\left(e^{-r |x|}\right) \label{density asy}
\end{equation}
for some $r\geq 1$ as $|x| \rightarrow \infty$. This may be shown from Laurincikas \cite{laurincikas} (Eqn. 6.17 pg. 47). We then note that the \textit{generalized hyperbolic} distributions are a large family that includes many distributions that are symmetric, elliptical, and satisfy the tail constraint (\ref{density asy}). If $X(t)$ and $X(t+\Delta)$ have such a distribution, then one can apply the general results of Blaesild \& Jensen \cite{blaesild81} ((1) pg.47, Theorem 1 (b) pg. 49-50) to show that the conditional probability density of $X(t+\Delta)$ given $X(t)$ has the form
\begin{align}
p_{t+\Delta|t}(x)=\sqrt{\frac{\alpha'}{2\pi}}\left(\frac{1}{\delta_{t+\Delta|t}}\right)^{\lambda-1/2}\frac{K_{\lambda-1}\left(\alpha'\left(\delta_{t+\Delta|t}^{2}+\frac{\left(x-\mu_{t+\Delta|t}\right)^{2}}{R(0)h(\Delta)}\right)^{1/2}\right)}{K_{\lambda-1/2}(\alpha' \delta_{t+\Delta|t})\left(\delta_{t+\Delta|t}^{2}+\frac{\left(x-\mu_{t+\Delta|t}\right)^{2}}{R(0)h(\Delta)}\right)^{\frac{1-\lambda}{2}}}, \label{gen hyp cond dens}
\end{align}
where $K_{\nu}(z)$ is the $\nu$th order modified Bessel function of the second kind, 
\begin{equation}
\alpha'=\alpha\sqrt{R(0)}, \hspace{.4cm}
\delta_{t+\Delta|t}=\sqrt{\frac{\delta^{2}}{R(0)}+\frac{X^{2}(t)}{R^{2}(0)}}, \label{gen hyp params}
\end{equation}
and $\lambda$, $\alpha$, and $\delta$ are given parameters. Additionally from \cite{blaesild81} (pg. 50-51) one can show that the conditional variance has the form (\ref{gen cond var}) with 
\begin{equation}
V\left(X(t)\right)=\frac{1}{\alpha}\sqrt{\delta^{2}+\frac{X^{2}(t)}{R(0)}}\frac{K_{\lambda+1/2}\left(\alpha\sqrt{\delta^{2}+\frac{X^{2}(t)}{R(0)}}\right)}{K_{\lambda-1/2}\left(\alpha\sqrt{\delta^{2}+\frac{X^{2}(t)}{R(0)}}\right)}.\label{gen hyp var}
\end{equation}
We then note the asymptotic result \cite{bessel} (DLMF 10.17.1, 10.40.2)
\begin{equation}
K_{\nu}(z)\sim \sqrt{\frac{\pi}{2z}}e^{-z}, \label{m bessel identity}
\end{equation}
which, with (\ref{variance factor}), (\ref{gen cond var}), and (\ref{gen hyp var}) shows that, for the generalized hyperbolic family,
\begin{equation}
\textnormal{var}\left\{X(t+\Delta)|X(t)\right\} \sim \left|X(t)\right|\frac{\sqrt{R(0)}}{\alpha}\left(1-\left(\frac{R(\Delta)}{R(0)}\right)^{2}\right) \label{gen hyp var asy}
\end{equation}
as $\left|X(t)\right|\rightarrow \infty$. 

We will use the properties of $\mathcal{P}(s)$ described above as well as the properties of elliptical and generalized hyperbolic random variables to study the conditional distribution of $\log\left|\zeta(\sigma+i(t+\Delta))\right|$ when $\zeta(\sigma+it)=0$. We will see that the conditional distribution exists for $\sigma=1/2$ but not for $\sigma>1/2$.

\section{Application to zeta and L-functions}
We begin by using Lemma \ref{cov gen lemma} to give the following result describing $\log\left|\zeta\left(\sigma+it\right)\right|$'s autocovariance function:

\begin{lemma}
If $t$ is uniformly distributed in $[T,2T]$, then as $T\rightarrow \infty$,  
\begin{align}
R_{\log \zeta}(\sigma,\Delta)=E\left\{\log\left|\zeta\left(\sigma+i(t+\Delta)\right)\right|\log\left|\zeta\left(\sigma+it\right)\right|\right\} \nonumber \\ \rightarrow\frac{1}{2}\sum_{k=1}^{\infty}\frac{\textnormal{Re}\mathcal{P}\left(k(2\sigma+i\Delta)\right)}{k^{2}} \label{logzeta cov}
\end{align}
for $\Delta \in \mathbb{R}$.  \label{logzeta cov lemma}
\end{lemma}
\begin{proof}
We begin by considering $\sigma>1$ so that we may apply the Euler product and Taylor expansion for $\log\left|\zeta(s)\right|$ to write
\begin{align}
\log\left|\zeta\left(\sigma+i(t+\Delta)\right)\right|\log\left|\zeta\left(\sigma+it\right)\right| \nonumber \\ =\sum_{k=1}^{\infty}\frac{\textnormal{Re}\mathcal{P}\left(k\left(\sigma+i(t+\Delta)\right)\right)\textnormal{Re}\mathcal{P}\left(k\left(\sigma+it\right)\right)}{k^{2}} \nonumber \\
+2\sum_{k=1}^{\infty}\sum_{j<k}\frac{\textnormal{Re}\mathcal{P}\left(k\left(\sigma+i(t+\Delta)\right)\right)\textnormal{Re}\mathcal{P}\left(j\left(\sigma+it\right)\right)}{kj}  \label{app error 3}
\end{align}
We next note that 
\begin{align}
\sum_{k=1}^{\infty}\frac{\left|\textnormal{Re}\mathcal{P}\left(k\left(\sigma+i(t+\Delta)\right)\right)\textnormal{Re}\mathcal{P}\left(k\left(\sigma+it\right)\right)\right|}{k^{2}} \leq \sum_{k=1}^{\infty}\frac{\mathcal{P}^{2}(k\sigma)}{k^{2}}<\infty \label{term 1 upper bound}
\end{align}
and 
\begin{align}
2\sum_{k=1}^{\infty}\sum_{j<k}\frac{\left|\textnormal{Re}\mathcal{P}\left(k\left(\sigma+i(t+\Delta)\right)\right)\textnormal{Re}\mathcal{P}\left(j\left(\sigma+it\right)\right)\right|}{kj} \nonumber \\ \leq 2\sum_{k=1}^{\infty}\frac{\mathcal{P}(k\sigma)}{k}\sum_{j<k}\frac{\mathcal{P}(j \sigma)}{j}< 2\mathcal{P}(\sigma)\sum_{k=1}^{\infty}\mathcal{P}(k\sigma)<\infty. \label{term 2 upper bound}
\end{align}
These upper bounds may be easily shown to be finite from the asymptotic decay rates implied by (\ref{prime zeta sum}) and the ratio test. The finite bounds (\ref{term 1 upper bound}) and (\ref{term 2 upper bound}) show that (\ref{app error 3})'s series satisfy the condition (\ref{ft cond}). We may therefore apply the expectation to (\ref{app error 3}) term-by-term with $a=T$, $b=2T$, and $T\rightarrow \infty$. We do so and apply Lemma \ref{cov gen lemma} to show that the expected value of (\ref{app error 3})'s second term vanishes while the expected value of its first term is given by (\ref{logzeta cov}).
\end{proof}
We note that, similarly to Lemma \ref{cov gen lemma}, Lemma \ref{logzeta cov lemma}'s result (\ref{logzeta cov}) is convergent using the prime sum (\ref{prime zeta sum}) for all $\sigma>1/2$, and it extends to a still larger domain in $\sigma>0$ using the analytic continuation (\ref{prime zeta ac}). We will next apply the above results to study the statistical behavior of $\log\left|\zeta(s)\right|$ both on and off the critical line.

\subsection{On the critical line}
We first show that, for $\sigma=1/2$, Lemma \ref{logzeta cov lemma}'s result (\ref{logzeta cov}) reproduces the logarithmic correlations described by Bourgade \cite{bourgade2010} as well as Fyodorov, Hiary, \& Keating \cite{fk} \cite{fhk}. Showing this will use several principles that we will also apply in the proof of Theorem \ref{thm intro} below. We first note that, since $\mathcal{P}(s)=O(1)$ for all $\textnormal{Re}(s)>1$, (\ref{logzeta cov}) with $\sigma=1/2$ satisfies
$$
R_{\log \zeta}\left(1/2,\Delta\right)=\frac{1}{2}\textnormal{Re}\mathcal{P}\left(1+i\Delta\right)+O(1),
$$
which we can expand using (\ref{prime zeta ac}) to write
\begin{equation}
R_{\log \zeta}\left(1/2,\Delta\right)=\frac{1}{2}\log\left|\zeta\left(1+i\Delta\right)\right|+\frac{1}{2}\sum_{n=2}^{\infty}\frac{\mu(n)}{n}\log\left|\zeta\left(n\left(\sigma+i\Delta\right)\right)\right|+O(1). \label{420.1}
\end{equation}
We next make the general note that, for $\sigma \geq 1/2$ and $\tau \in \mathbb{R}$, 
\begin{equation}
\sum_{n=2}^{\infty}\frac{\mu(n)}{n}\log\zeta\left(n\left(\sigma+i\tau\right)\right)=O(1) \label{over with}
\end{equation}
because $\zeta(s)$ has no poles with $\textnormal{Re}(s)>1$ and the summands in (\ref{over with}) are $O\left(\frac{1}{n2^{n\sigma}}\right)$ as $n\rightarrow \infty$. 
Hence 
\begin{equation}
R_{\log \zeta}\left(1/2,\Delta\right)=\frac{1}{2}\log\left|\zeta\left(1+i\Delta\right)\right|+O(1). \label{420.2}
\end{equation}
We then note that $\zeta(s)$ has the following Laurent series expansion around its simple pole at $s=1$:
\begin{equation}
\zeta(s)=\frac{1}{s-1}+\sum_{k=0}^{\infty}\frac{(-1)^{k}}{k!}\gamma_{k}(s-1)^{k}, \label{laurent basic}
\end{equation}
where the $\gamma_{k}$ are the Stieltjes constants. It is clear from (\ref{laurent basic}) that $\zeta\left(1+i\Delta\right)\approx \frac{1}{i\Delta}$ for small $\left|\Delta\right|$ and hence, by (\ref{420.2}),
\begin{equation}
R_{\log \zeta}\left(1/2,\Delta\right)\approx -\frac{1}{2}\log \left|\Delta\right|, \label{420.3}
\end{equation}
reproducing logarithmic correlations over short distances.

We next will directly use Selberg's limit theorem (\ref{selberg clt}) as well as Lemma \ref{logzeta cov lemma} to prove Theorem \ref{thm intro}.

\label{thm intro proof}
\begin{proof}
Let $t$ be uniformly distributed in $[T,2T]$ with $\Delta \in [T-t,2T-t]$. By (\ref{selberg clt}), $\log\left|\zeta(1/2+it)\right|$ and $\log\left|\zeta\left(1/2+i\left(t+\Delta\right)\right)\right|$ are normally distributed for large $T$. Thus the conditional mean has the limiting form (\ref{gen cond mean}), which we combine with (\ref{logzeta cov}) to write 
\begin{align}
E\left\{\log\left|\zeta\left(1/2+i\left(t+\Delta\right)\right)\right| | \log\left|\zeta\left(1/2+it\right)\right|   \right\} \nonumber \\ \sim \frac{R_{\log\zeta}(1/2,\Delta)}{R_{\log\zeta}(1/2,0)}\log\left|\zeta(1/2+it)\right| 
=\frac{\sum_{k=1}^{\infty}\frac{\textnormal{Re}\mathcal{P}\left(k(1+i\Delta)\right)}{k^{2}}}{\sum_{k=1}^{\infty}\frac{\mathcal{P}(k)}{k^{2}}}\log\left|\zeta(1/2+it)\right|.
\label{app2 cond mean}
\end{align}
We then apply (\ref{prime zeta ac}) and use (\ref{over with}) and (\ref{laurent basic}) to note that
\begin{align}
\sum_{k=1}^{\infty}\frac{\mathcal{P}(k)}{k^{2}}=\mathcal{P}(1)+O(1)=\log\zeta(1)+\sum_{n=2}^{\infty}\frac{\mu(n)}{n}\log\zeta(n)+O(1) \nonumber \\ =\log \zeta(1)+O(1)=\log\left( \lim_{z\rightarrow 1^{+}}\frac{1}{z-1}\right)+O(1)=\lim_{z\rightarrow 0^{+}}\log \left(\frac{1}{z}\right)+O(1). \label{laurent}
\end{align}
We additionally note that when $\zeta(1/2+it)=0$
\begin{equation}
\log\left|\zeta\left(1/2+it\right)\right|=\lim_{z\rightarrow 0^{+}}\log \left(z\right). \label{there it is}
\end{equation}
Substituting the two limits (\ref{laurent})-(\ref{there it is}) into (\ref{app2 cond mean}) and cancelling gives the conditional expectation 
\begin{equation}
E\left\{\log\left|\zeta\left(1/2+i\left(t+\Delta\right)\right)\right| | \zeta\left(1/2+it\right)=0   \right\}\sim -\sum_{k=1}^{\infty}\frac{\textnormal{Re}\mathcal{P}\left(k(1+i\Delta)\right)}{k^{2}}. \label{app2 cond mean final}
\end{equation}

We next consider the conditional variance which, by (\ref{selberg clt}), has the limiting form (\ref{gen cond var}) where $V(.)=1$ and $h(\Delta)$ has the form (\ref{variance factor}). We also note from Selberg's central limit theorem that the leading factor $R(0)$ in (\ref{variance factor}) may be replaced with $\sim \frac{1}{2}\log\log T$. We apply these facts to write
\begin{align}
\textnormal{var}\left\{\log\left|\zeta\left(1/2+i\left(t+\Delta\right)\right)\right| | \log\left|\zeta\left(1/2+it\right)\right| \right\} \nonumber \\ \sim \left(\frac{1}{2}\log\log T\right)\left(1-\left(\frac{R_{\log \zeta}(1/2,\Delta)}{R_{\log \zeta}(1/2,0)}\right)^{2}\right) \nonumber \\ 
=\left(\frac{1}{2}\log\log T\right)\left(1-\left(\frac{\sum_{k=1}^{\infty}\frac{\textnormal{Re}\mathcal{P}\left(k(1+i\Delta)\right)}{k^{2}}}{\sum_{k=1}^{\infty}\frac{\mathcal{P}\left(k\right)}{k^{2}}}\right)^{2}\right)
\label{app2 var}
\end{align}
We then note from (\ref{prime zeta ac}) that $\mathcal{P}(1+i\Delta)$ and hence the numerator in (\ref{app2 var})'s second term,
$$
\left(\sum_{k=1}^{\infty}\frac{\textnormal{Re}\mathcal{P}\left(k(1+i\Delta)\right)}{k^{2}}\right)^{2},
$$
has no singularities, i.e., only takes finite values with $\Delta\neq 0$ while the denominator is $\left(\mathcal{P}(1)+O(1)\right)^{2}$, which, by (\ref{laurent}), diverges. We thus conclude that
\begin{equation}
\textnormal{var}\left\{\log\left|\zeta\left(1/2+i\left(t+\Delta\right)\right)\right| | \log\left|\zeta\left(1/2+it\right)\right| \right\} \sim \left(\frac{1}{2}\log\log T\right)\left(1+o(1)\right). \label{app2 var final}
\end{equation}
This completes the proof. 
\end{proof}

\subsection{Off the critical line}
By (\ref{cov gen}) the unconditional variance of $\textnormal{Re}\mathcal{P}\left(\sigma+it\right)$ for $\sigma>1/2$ is given by
\begin{equation}
\frac{\mathcal{P}\left(2\sigma\right)}{2}=\frac{1}{2}\sum_{p}\frac{1}{p^{2\sigma}}=O(1). \label{always conv}
\end{equation}
Since the variance (\ref{always conv}) is convergent, in contrast to when $\sigma=1/2$, a central limit theorem does not hold for $\textnormal{Re}\mathcal{P}\left(\sigma+it\right)$ with $\sigma>1/2$. Relatedly, we can show the following result describing the nonexistence of $\log \left|\zeta\left(\sigma+i\left(t+\Delta\right)\right)\right|$'s conditional distribution when $\zeta\left(\sigma+it\right)=0$ with $\sigma>1/2$:

\begin{thm}
Suppose $t$ is uniformly distributed in $[T,2T]$ with $T\rightarrow \infty$ and let $p^{*}_{t+\Delta|t}(x)$ denote the conditional probability density function of $\log\left|\zeta\left(\sigma+i\left(t+\Delta\right)\right)\right|$ given $\zeta\left(\sigma+it\right)$. Then
\begin{align}
E\left\{\log\left|\zeta\left(\sigma+i\left(t+\Delta\right)\right)\right||\zeta\left(\sigma+it\right)= 0\right\} = \begin{cases} 
       -\infty \hspace{.1cm}; & \textnormal{Re}\mathcal{P}\left(2\sigma+i\Delta\right)>0 \\
      +\infty \hspace{.1cm}; & \textnormal{Re}\mathcal{P}\left(2\sigma+i\Delta\right)<0\\
      O(1) \hspace{.1cm}; & \textnormal{Re}\mathcal{P}\left(2\sigma+i\Delta\right)=0
   \end{cases} \label{main result mean}
\end{align}
and, if $\sigma>1/2$ and $\zeta(\sigma+it)=0$, then
\begin{equation}
p^{*}_{t+\Delta|t}(x)= 0 \label{cor1 result}
\end{equation}
for all $|x|<\infty$ and all $\Delta \in \mathbb{R}$ such that $0<\left|\textnormal{Re}\mathcal{P}\left(2\sigma+i\Delta\right)/\mathcal{P}\left(2\sigma\right)\right|<1$.
\label{main theorem}
\end{thm}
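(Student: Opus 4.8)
The plan is to carry everything over to the series $X(u)=\mathrm{Re}\,P(\sigma+iu)$, which has the form (\ref{formula}) with $a_{p}=p^{-\sigma}$ and $\theta_{p}=0$; since $\sigma>1/2$ it satisfies (\ref{sq conv}), so $X(t)$ and $X(t+\Delta)$ jointly obey the elliptical and generalized-hyperbolic formalism of Section \ref{prelims}, with $2R(\Delta)=\mathrm{Re}\,P(2\sigma+i\Delta)$ by (\ref{pz autocov}) and $R(0)=\tfrac12 P(2\sigma)>0$ finite by (\ref{always conv}). Alongside this I would record the off-line analogue of (\ref{prime zeta at riemann zeros}): for $n\ge 2$ one has $n\sigma>1$, so $\zeta(n\sigma+inu)$ is bounded above by $\zeta(n\sigma)$ and below by $1/\zeta(n\sigma)$, whence by (\ref{prime zeta ac}) and an estimate of the type (\ref{prop21step1})--(\ref{prop21step2}) we get $\mathrm{Re}\,P(\sigma+iu)=\log|\zeta(\sigma+iu)|+O(1)$ \emph{uniformly} in $u$. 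In particular, conditioning on $\zeta(\sigma+it)=x$ pins $X(t)=\log|x|+O(1)$, which is driven to $-\infty$ as $x\to 0$.

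For the conditional mean (\ref{main result mean}) I would apply the elliptical conditional-mean identity (\ref{gen cond mean}), namely $E\{X(t+\Delta)\mid X(t)\}=\frac{R(\Delta)}{R(0)}X(t)=\frac{\mathrm{Re}\,P(2\sigma+i\Delta)}{P(2\sigma)}X(t)$, and combine it with the uniform $O(1)$ relation to obtain
\[
E\{\log|\zeta(\sigma+i(t+\Delta))|\mid\zeta(\sigma+it)=x\}=\frac{\mathrm{Re}\,P(2\sigma+i\Delta)}{P(2\sigma)}\log|x|+O(1).
\]
Since $P(2\sigma)=\sum_{p}p^{-2\sigma}>0$ and $\log|x|\to-\infty$, letting $x\to 0$ produces exactly the trichotomy in (\ref{main result mean}) according to the sign of $\mathrm{Re}\,P(2\sigma+i\Delta)$.

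For (\ref{cor1 result}) I would work under $0<|R(\Delta)/R(0)|<1$; the upper bound holds for every $\Delta\neq 0$ (one cannot have $|\cos(\Delta\log 2)|=|\cos(\Delta\log 3)|=1$ simultaneously, as $\log 3/\log 2\notin\mathbb{Q}$) and the lower bound fails only at the isolated zeros of the real-analytic function $\Delta\mapsto\mathrm{Re}\,P(2\sigma+i\Delta)$, so the hypothesis holds for all but a measure-zero set of $\Delta>0$, which is the ``most values of $\Delta$'' of the introduction. I would then substitute into the generalized-hyperbolic conditional density (\ref{gen hyp cond dens}): as $X(t)\to-\infty$ one has $\delta_{t+\Delta|t}\sim|X(t)|/R(0)\to\infty$ and $\mu_{t+\Delta|t}=\frac{R(\Delta)}{R(0)}X(t)\to\pm\infty$, so for a fixed argument the quantity $\delta_{t+\Delta|t}^{2}+\frac{(x-\mu_{t+\Delta|t})^{2}}{R(0)h(\Delta)}$ is asymptotic to $X(t)^{2}/(R(0)^{2}-R(\Delta)^{2})$ (using $R(0)h(\Delta)=R(0)^{2}-R(\Delta)^{2}$ from (\ref{variance factor})). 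By (\ref{m bessel identity}) the ratio of the two modified-Bessel factors in (\ref{gen hyp cond dens}) then decays like $\exp\!\big(-\alpha|X(t)|\big(\sqrt{R(0)}/\sqrt{R(0)^{2}-R(\Delta)^{2}}-1/\sqrt{R(0)}\big)\big)$, with a strictly positive bracket because $|R(\Delta)|<R(0)$; this exponential decay in $|X(t)|$ overwhelms the polynomial-in-$|X(t)|$ factors, so the conditional density of $X(t+\Delta)$ given $X(t)$ vanishes at every fixed point as $X(t)\to-\infty$. Translating back through $\log|\zeta(\sigma+i(t+\Delta))|=X(t+\Delta)+O(1)$ and $X(t)=\log|x|+O(1)\to-\infty$ gives $p^{*}_{t+\Delta|t}(x)\to 0$ for all $|x|<\infty$. (Alternatively, one avoids the explicit formula and argues by Chebyshev: by (\ref{gen cond mean}) the conditional mean grows linearly in $|X(t)|$ while by (\ref{gen hyp var asy}) the conditional standard deviation grows only like $\sqrt{|X(t)|}$, so the conditional law slides off every bounded interval.)

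The main obstacle is not the Bessel asymptotics, which are routine given (\ref{m bessel identity}), but making precise the passage between $\log|\zeta(\sigma+i\cdot)|$ and $\mathrm{Re}\,P(\sigma+i\cdot)$ and the meaning of conditioning on $\zeta(\sigma+it)=x$ within this framework --- the same subtlety that arises in the proof of Theorem \ref{thm intro}. What makes it work is that the discrepancy is a \emph{uniform} $O(1)$, so forcing $\zeta(\sigma+it)\to 0$ traps $\mathrm{Re}\,P(\sigma+it)$ in a bounded window receding to $-\infty$, after which the elliptical conditional-mean identity (\ref{gen cond mean}) and the generalized-hyperbolic conditional density (\ref{gen hyp cond dens}) --- applied to $\mathrm{Re}\,P(\sigma+i\cdot)$ as set up in Section \ref{prelims} --- carry the argument home.
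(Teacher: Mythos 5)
Your treatment of the conditional-mean statement (\ref{main result mean}) is essentially the paper's own argument: identify $\mathrm{Re}\,P(\sigma+i\cdot)$ as a series of the form (\ref{formula}), apply (\ref{gen cond mean}) with (\ref{pz autocov})--(\ref{always conv}) to get $E\{X(t+\Delta)\mid X(t)\}=\frac{\mathrm{Re}P(2\sigma+i\Delta)}{P(2\sigma)}X(t)$, note $X(t)\to-\infty$ as $\zeta(\sigma+it)\to 0$, and transfer to $\log|\zeta|$ through the convergent tail of (\ref{prime zeta ac}), i.e.\ (\ref{almost last step})--(\ref{last step}). That part is fine.

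The density statement (\ref{cor1 result}) is where you have a genuine gap: you prove it by substituting into the generalized-hyperbolic conditional density (\ref{gen hyp cond dens}) (and your Chebyshev alternative leans on (\ref{gen hyp var asy})), but nothing in the setup gives you that $\mathrm{Re}\,P(\sigma+it)$ is generalized hyperbolic. The convergence condition (\ref{sq conv}) only yields the exponential tail bound (\ref{density asy}); the paper is explicit that the generalized hyperbolic family merely \emph{contains} many symmetric elliptical laws with such tails, and it treats membership in that family as an additional hypothesis, reserved for Corollary \ref{cor2} and Proposition \ref{var thm} (the case $\mathrm{Re}\,P(2\sigma+i\Delta)=0$, which Theorem \ref{main theorem} deliberately leaves open). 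So as written, your argument proves the corollary's conditional statement rather than the unconditional claim (\ref{cor1 result}) of the theorem. The paper's own route avoids any distributional assumption beyond ellipticity: the joint law is elliptical by (\ref{chf}), so the conditional density has the form (\ref{cond dens gen}) with $\mu_{t+\Delta|t}$ as in (\ref{cond mean pz}) and $h(\Delta)$ as in (\ref{h pz}); integrability of the density forces $g(u)=o(u^{-1/2})$, hence $p_{t+\Delta|t}(x)=o\left(\left|x-\mu_{t+\Delta|t}\right|^{-1}\right)$; the hypothesis $0<\left|\mathrm{Re}P(2\sigma+i\Delta)/P(2\sigma)\right|<1$ guarantees both $h(\Delta)\neq 0$ and $\mu_{t+\Delta|t}\to\pm\infty$ as $\mathrm{Re}\,P(\sigma+it)\to-\infty$, so the density at any fixed $x$ vanishes, and (\ref{almost last step}) transfers this to $\log|\zeta(\sigma+i(t+\Delta))|$. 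Replacing your Bessel-asymptotic computation with this integrability argument closes the gap and matches the stated generality of the theorem.
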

\begin{proof}
We first apply (\ref{prime zeta ac}) and (\ref{over with}) to write
\begin{align}
\textnormal{Re}\mathcal{P}\left(\sigma+i\tau\right)=\log\left|\zeta\left(\sigma+i\tau\right)\right|+\sum_{n=2}^{\infty}\frac{\mu(n)}{n}\log \left|\zeta\left(n\sigma+ni\tau\right)\right| \nonumber \\ =\log\left|\zeta\left(\sigma+i\tau\right)\right|+O(1). \label{last step}
\end{align}
We next apply (\ref{gen cond mean}) with (\ref{cov gen}) to give the following result for the conditional expectation of $\textnormal{Re}\mathcal{P}\left(\sigma+i\left(t+\Delta\right)\right)$ given $\textnormal{Re}\mathcal{P}\left(\sigma+it\right)$:
\begin{equation}
E\left\{\textnormal{Re}\mathcal{P}\left(\sigma+i\left(t+\Delta\right)\right)|\textnormal{Re}\mathcal{P}\left(\sigma+it\right)\right\}=\frac{\textnormal{Re}\mathcal{P}\left(2\sigma+i\Delta\right)}{\mathcal{P}\left(2\sigma\right)}\textnormal{Re}\mathcal{P}\left(\sigma+it\right). \label{cond mean pz}
\end{equation}
We then note from (\ref{last step}) that when $\zeta(\sigma+it)=0$ with $\sigma>1/2$
\begin{equation}
\textnormal{Re}\mathcal{P}\left(\sigma+it\right)=\lim_{z \rightarrow 0^{+}}\log (z)+O(1), \label{pz divergence}
\end{equation}
which, with (\ref{cond mean pz}) and (\ref{always conv}), shows that
\begin{equation}
E\left\{\textnormal{Re}\mathcal{P}\left(\sigma+i\left(t+\Delta\right)\right)|\zeta\left(\sigma+it\right)=0\right\}\\ = \begin{cases} 
       -\infty \hspace{.1cm}; & \textnormal{Re}\mathcal{P}\left(2\sigma+i\Delta\right)>0 \\
      +\infty \hspace{.1cm}; & \textnormal{Re}\mathcal{P}\left(2\sigma+i\Delta\right)<0\\
      O(1)\hspace{.1cm}; & \textnormal{Re}\mathcal{P}\left(2\sigma+i\Delta\right)=0
   \end{cases} 
\label{almost main result mean}
\end{equation}
We additionally note from (\ref{last step}) that, for an arbitrary event $Y$, 
\begin{equation}
E\left\{\textnormal{Re}\mathcal{P}\left(\sigma+i\tau\right)|Y\right\}=E\left\{\log\left|\zeta\left(\sigma+i\tau\right)\right||Y\right\}+O(1). \label{arb}
\end{equation}
From the results (\ref{almost main result mean}) and (\ref{arb}) we conclude (\ref{main result mean}).

We then let $p_{t+\Delta|t}(x)$ denote the conditional probability density function of $\textnormal{Re}\mathcal{P}\left(\sigma+i\left(t+\Delta\right)\right)$ given $\textnormal{Re}\mathcal{P}\left(\sigma+it\right)$ and note that it has the elliptical form (\ref{cond dens gen}) with $\mu_{t+\Delta|t}$ given by (\ref{cond mean pz}) and
\begin{equation}
h(\Delta)=\frac{\mathcal{P}\left(2\sigma\right)}{2}\left(1-\left(\frac{\textnormal{Re}\mathcal{P}\left(2\sigma+i\Delta\right)}{\mathcal{P}\left(2\sigma\right)}\right)^{2}\right). \label{h pz}
\end{equation}
Due to the requirement that $\int_{-\infty}^{\infty} p_{t+\Delta|t}(x)dx$ converges, $g(u)=o(u^{-1/2})$. Hence, from (\ref{cond dens gen}) and (\ref{cond mean pz}), for nonzero (\ref{h pz}) we have
\begin{equation}
p_{t+\Delta|t}(x)=o\left(\left|x- \frac{\textnormal{Re}\mathcal{P}\left(2\sigma+i\Delta\right)}{\mathcal{P}\left(2\sigma\right)}\textnormal{Re}\mathcal{P}\left(\sigma+it\right)  \right|^{-1}\right). \label{cor1 done}
\end{equation}
(\ref{pz divergence})-(\ref{almost main result mean}) and (\ref{h pz})-(\ref{cor1 done}) show that $p_{t+\Delta|t}(x)=0$ if $\zeta\left(\sigma+it\right)=0$ for all $|x|<\infty$ and $\Delta$ such that $0 <\left|\textnormal{Re}\mathcal{P}\left(2\sigma+i\Delta\right)/\mathcal{P}\left(2\sigma\right)\right|<1$. Then from (\ref{last step}) the result (\ref{cor1 result}) follows.
\end{proof}

The proof of Theorem \ref{main theorem} shows that, for $\sigma>1/2$ and most values of $\Delta$, the conditional moments of $\log\left|\zeta\left(\sigma+i\left(t+\Delta\right)\right)\right|$ are infinite and its conditional probability density function is identically zero when $\zeta(\sigma+it)=0$. A probability density function cannot be zero everywhere on the real line. We thus conclude from this result that, for for most values of $\Delta$, the conditional distribution of $\log\left|\zeta\left(\sigma+i\left(t+\Delta\right)\right)\right|$ does not exist when $\zeta(\sigma+it)=0$.

Distances $\Delta$ where $\textnormal{Re}\mathcal{P}\left(2\sigma+i\Delta\right)=0$, i.e., where $\log\left|\zeta\left(\sigma+i\left(t+\Delta\right)\right)\right|$ and $\textnormal{Re}\mathcal{P}\left(\sigma+it\right)$ are \textit{uncorrelated} (see Appendix \ref{appb}) are a possible exception to the outcome described above. At these distances $\log\left|\zeta\left(\sigma+i\left(t+\Delta\right)\right)\right|$'s conditional expectation can exist, but the status of its higher order moments and density is unclear and depends on the specific, non-normal, unconditional distribution of $\textnormal{Re}\mathcal{P}\left(\sigma+it\right)$.

We recall from (\ref{always conv}) that (\ref{sq conv})-(\ref{density asy}) is satisfied by $\textnormal{Re}\mathcal{P}\left(\sigma+it\right)$ with $\sigma>1/2$, which makes it likely that $\textnormal{Re}\mathcal{P}\left(\sigma+it\right)$ and $\textnormal{Re}\mathcal{P}\left(\sigma+i\left(t+\Delta\right)\right)$ have a generalized hyperbolic distribution. If this is true, then we can easily prove that $\log\left|\zeta\left(\sigma+i\left(t+\Delta\right)\right)\right|$'s conditional density $p^{*}_{t+\Delta|t}(x)$ is also identically zero at distances $\Delta$ where $\textnormal{Re}\mathcal{P}\left(2\sigma+i\Delta\right)=0$. 

\begin{cor}
If $\textnormal{Re}\mathcal{P}\left(\sigma+it\right)$ and $\textnormal{Re}\mathcal{P}\left(\sigma+i\left(t+\Delta\right)\right)$ have a generalized hyperbolic distribution with $\sigma>1/2$ and $t$ uniformly distributed in $[T,2T]$ with $T\rightarrow \infty$, then, if $\textnormal{Re}\mathcal{P}\left(2\sigma+i\Delta\right)=0$ and $\zeta(\sigma+it)=0$, 
\begin{equation}
p^{*}_{t+\Delta|t}(x)= 0 \label{gen hyp density vanish}
\end{equation}
for all $|x|<\infty$.  \label{cor2}
\end{cor}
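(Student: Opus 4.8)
The plan is to substitute the hypothesis $\textnormal{Re}P(2\sigma+i\Delta)=0$ and the limit $\zeta(\sigma+it)\to0$ directly into the generalized hyperbolic conditional density (\ref{gen hyp cond dens}) and track the resulting asymptotics. First I would note that by (\ref{pz autocov}) the condition $\textnormal{Re}P(2\sigma+i\Delta)=0$ is exactly $R(\Delta)=0$; hence by (\ref{gen cond mean}) the conditional mean $\mu_{t+\Delta|t}$ vanishes and by (\ref{variance factor}) $h(\Delta)=R(0)$, which is nonzero. Next, as in (\ref{pz divergence}), as $\zeta(\sigma+it)\to0$ the conditioning value $X(t)=\textnormal{Re}P(\sigma+it)\to-\infty$, so the parameter $\delta_{t+\Delta|t}=\sqrt{\delta^{2}/R(0)+X^{2}(t)/R^{2}(0)}$ from (\ref{gen hyp params}) diverges to $+\infty$. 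With these substitutions, and writing $p_{t+\Delta|t}$ for the conditional density of $\textnormal{Re}P(\sigma+i(t+\Delta))$ given $\zeta(\sigma+it)$, the density (\ref{gen hyp cond dens}) becomes, for fixed finite $x$,
\[
p_{t+\Delta|t}(x)=\sqrt{\frac{\alpha'}{2\pi}}\;\delta_{t+\Delta|t}^{\,1/2-\lambda}\,\frac{K_{\lambda-1}(\alpha' w)}{K_{\lambda-1/2}(\alpha'\delta_{t+\Delta|t})}\,w^{\lambda-1},\qquad w:=\Bigl(\delta_{t+\Delta|t}^{2}+\tfrac{x^{2}}{R(0)^{2}}\Bigr)^{1/2}.
\]

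The heart of the argument is the asymptotic evaluation of this expression as $\delta_{t+\Delta|t}\to\infty$ with $x$ fixed. Since $w-\delta_{t+\Delta|t}=O(\delta_{t+\Delta|t}^{-1})\to0$ and $w/\delta_{t+\Delta|t}\to1$, the asymptotic (\ref{m bessel identity}) gives
\[
\frac{K_{\lambda-1}(\alpha' w)}{K_{\lambda-1/2}(\alpha'\delta_{t+\Delta|t})}\sim\sqrt{\frac{\delta_{t+\Delta|t}}{w}}\;e^{-\alpha'(w-\delta_{t+\Delta|t})}\longrightarrow1,
\]
while $w^{\lambda-1}\sim\delta_{t+\Delta|t}^{\lambda-1}$, so that the powers of $\delta_{t+\Delta|t}$ combine as $\delta_{t+\Delta|t}^{1/2-\lambda}\cdot\delta_{t+\Delta|t}^{\lambda-1}=\delta_{t+\Delta|t}^{-1/2}$. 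Hence $p_{t+\Delta|t}(x)\sim\sqrt{\alpha'/2\pi}\,\delta_{t+\Delta|t}^{-1/2}\to0$ for every finite $x$; moreover, since the $o(1)$ corrections above are uniform for $x$ in any compact set, $\sup_{|y|\le M}p_{t+\Delta|t}(y)\to0$ for each fixed $M$.

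It then remains to pass from $\textnormal{Re}P(\sigma+i(t+\Delta))$ to $\log|\zeta(\sigma+i(t+\Delta))|$. By (\ref{almost last step})--(\ref{last step}) these quantities differ by $\sum_{n\ge2}\frac{\mu(n)}{n}\log|\zeta(n\sigma+ni(t+\Delta))|$, which by the estimate used in the proof of Theorem \ref{thm intro} satisfies $\bigl|\sum_{n\ge2}\frac{\mu(n)}{n}\log|\zeta(n\sigma+ni(t+\Delta))|\bigr|\le\sum_{n\ge2}\frac{\zeta(n\sigma)-1}{n}=:C<\infty$ uniformly in $t+\Delta$. Consequently, for any finite $x$ and $h>0$, the event $\{x<\log|\zeta(\sigma+i(t+\Delta))|\le x+h\}$ lies in $\{x-C<\textnormal{Re}P(\sigma+i(t+\Delta))\le x+h+C\}$, so
\[
\PP\bigl\{x<\log|\zeta(\sigma+i(t+\Delta))|\le x+h\,\big|\,\zeta(\sigma+it)\bigr\}\le\int_{x-C}^{x+h+C}p_{t+\Delta|t}(y)\,dy\le(h+2C)\sup_{|y|\le|x|+h+C}p_{t+\Delta|t}(y),
\]
which tends to $0$ as $\zeta(\sigma+it)\to0$ by the previous paragraph; dividing by $h$ and letting $h\to0$ yields $p^{*}_{t+\Delta|t}(x)\to0$.

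I expect the main obstacle to be the asymptotic bookkeeping in the second paragraph: one must verify that the exponential factors of the two modified Bessel functions cancel to leading order, which rests on $w-\delta_{t+\Delta|t}\to0$, and that the surviving algebraic powers of $\delta_{t+\Delta|t}$ assemble into the negative power $\delta_{t+\Delta|t}^{-1/2}$, so that the limit is honestly $0$ and not a positive constant. The transfer step in the third paragraph is, by comparison, routine once the uniform $O(1)$ estimate from Theorem \ref{thm intro} is in hand.
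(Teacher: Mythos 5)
Your main computation follows exactly the paper's route: $\textnormal{Re}P(2\sigma+i\Delta)=0$ gives $\mu_{t+\Delta|t}=0$ via (\ref{cond mean pz}) and $h(\Delta)=R(0)$, the conditioning $\zeta(\sigma+it)\to 0$ gives $\delta_{t+\Delta|t}\sim\left|\textnormal{Re}P(\sigma+it)\right|/R(0)\to\infty$ as in (\ref{del asy}), and applying (\ref{m bessel identity}) in (\ref{gen hyp cond dens}) makes the exponential factors cancel (since $w-\delta_{t+\Delta|t}\to 0$) and the algebraic powers collapse to $\delta_{t+\Delta|t}^{-1/2}$. That is precisely the paper's estimate (\ref{cor2done}), and your bookkeeping of it is correct and more explicit than the paper's.

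The transfer paragraph is where you diverge, and it has two issues. A minor one: the bound $\left|\log\zeta(n\sigma+ni\tau)\right|\le\zeta(n\sigma)-1$ imported from Theorem \ref{thm intro}'s proof requires $\left|\zeta(n\sigma+ni\tau)-1\right|<1$, which can fail for $n=2,3$ when $\sigma$ is near $1/2$ (e.g.\ $\zeta(1.2)-1>1$); replace it by $\left|\log\zeta(n\sigma+ni\tau)\right|\le\log\zeta(n\sigma)$, valid from the Dirichlet series of $\log\zeta$ in $\textnormal{Re}(s)>1$, which still yields a uniform constant $C$. The substantive one: the final step "dividing by $h$ and letting $h\to 0$" does not follow from your displayed inequality, because the enlarged interval has length $h+2C\ge 2C$; your bound reads $\frac{1}{h}\PP\{\cdot\}\le\frac{h+2C}{h}\sup p_{t+\Delta|t}$, which is not small uniformly as $h\to 0$ for a fixed conditioning value. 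What you have controlled is the iterated limit with $h\to 0$ taken last, whereas (\ref{gen hyp density vanish}) concerns the density, i.e.\ $h\to 0$ taken first, and the interchange is exactly the delicate point: a bounded but $t$-dependent shift $B(\sigma,t+\Delta)$ can in principle concentrate mass and inflate a pointwise density even when the density of $\textnormal{Re}P(\sigma+i(t+\Delta))$ is uniformly small on compacts. To be fair, the paper's own proof makes the same leap—(\ref{cor2done}) together with (\ref{pz divergence}) and (\ref{almost last step}) is all it offers—so you are no worse off than the source; but to make this step airtight you would need some control on the joint conditional law of $\textnormal{Re}P(\sigma+i(t+\Delta))$ and the correction term, not merely the pointwise bound $\left|B\right|\le C$.
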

\begin{proof}
If generalized hyperbolic, then $\textnormal{Re}\mathcal{P}\left(\sigma+i\left(t+\Delta\right)\right)$'s conditional density has the form (\ref{gen hyp cond dens}) and, by (\ref{cond mean pz}), $\mu_{t+\Delta|t}=0$ for $\textnormal{Re}\mathcal{P}\left(2\sigma+i\Delta\right)=0$. Additionally from (\ref{gen hyp params}) it is clear that as $\left|\textnormal{Re}\mathcal{P}\left(\sigma+it\right)\right|\rightarrow \infty$ ,
\begin{equation}
\delta_{t+\Delta|t}\sim\frac{\left|\textnormal{Re}\mathcal{P}\left(\sigma+it\right)\right|}{R_{\mathcal{P}}(0)}. \label{del asy}
\end{equation}
Applying $\mu_{t+\Delta|t}=0$ and (\ref{del asy}) along with (\ref{m bessel identity}) in (\ref{gen hyp cond dens}) and simplifying shows that, for all $|x|<\infty$
\begin{equation}
p_{t+\Delta|t}(x)=O\left(\left|\textnormal{Re}\mathcal{P}\left(\sigma+it\right)\right|^{-1/2}\right),\label{cor2done}
\end{equation}
which, with (\ref{pz divergence}) and (\ref{last step}), proves (\ref{gen hyp density vanish}).
\end{proof}

We may also easily show that, under a generalized hyperbolic distribution assumption for $\textnormal{Re}\mathcal{P}\left(\sigma+it\right)$ and $\textnormal{Re}\mathcal{P}\left(\sigma+i\left(t+\Delta\right)\right)$, the conditional variance of $\log\left|\zeta\left(\sigma+i\left(t+\Delta\right)\right)\right|$ is infinite if $\zeta(\sigma+it)=0$ at distances $\Delta$ where $\textnormal{Re}\mathcal{P}\left(2\sigma+i\Delta\right)=0$ and indeed at any distances where the autocorrelation is not $\pm 1$.

\begin{prop}
If $\textnormal{Re}\mathcal{P}\left(\sigma+it\right)$ and $\textnormal{Re}\mathcal{P}\left(\sigma+i\left(t+\Delta\right)\right)$ have a generalized hyperbolic distribution with $\sigma>1/2$ and $t$ uniformly distributed in $[T,2T]$ with $T \rightarrow \infty$, then
\begin{equation}
\textnormal{var}\left\{\log\left|\zeta\left(\sigma+i\left(t+\Delta\right)\right)\right||\zeta(\sigma+it)=0\right\} = \infty  \label{main result var}
\end{equation}
for all $\Delta \in \mathbb{R}$ such that $\left|\textnormal{Re}\mathcal{P}\left(2\sigma+i\Delta\right)/\mathcal{P}(2\sigma)\right|<1$.  \label{var thm}
\end{prop}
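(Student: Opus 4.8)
The plan is to read the conditional variance of $\textnormal{Re}P\left(\sigma+i\left(t+\Delta\right)\right)$ off the generalized hyperbolic asymptotic (\ref{gen hyp var asy}), observe that the conditioning $\zeta(\sigma+it)\to 0$ drives the relevant prefactor to infinity, and then transfer the conclusion from $\textnormal{Re}P$ to $\log\left|\zeta\right|$ via (\ref{last step}). First I would set $X(t)=\textnormal{Re}P\left(\sigma+it\right)$ and record from (\ref{pz autocov})--(\ref{always conv}) that $R(0)=P(2\sigma)/2$ and $R(\Delta)=\textnormal{Re}P\left(2\sigma+i\Delta\right)/2$, so that $R(\Delta)/R(0)=\textnormal{Re}P\left(2\sigma+i\Delta\right)/P(2\sigma)$; by stationarity and Cauchy--Schwarz one has $|R(\Delta)|\leq R(0)$, so this ratio always lies in $[-1,1]$ and the hypothesis $\left|\textnormal{Re}P\left(2\sigma+i\Delta\right)/P(2\sigma)\right|<1$ merely excludes the two endpoints. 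Since (\ref{always conv}) shows $\textnormal{Re}P\left(\sigma+it\right)$ satisfies (\ref{sq conv}), the generalized hyperbolic hypothesis justifies applying (\ref{gen hyp var asy}), giving, as $\left|\textnormal{Re}P\left(\sigma+it\right)\right|\to\infty$,
\[
\textnormal{var}\left\{\textnormal{Re}P\left(\sigma+i\left(t+\Delta\right)\right)\,\middle\vert\,\textnormal{Re}P\left(\sigma+it\right)\right\}\sim \left|\textnormal{Re}P\left(\sigma+it\right)\right|\frac{\sqrt{P(2\sigma)/2}}{\alpha}\left(1-\left(\frac{\textnormal{Re}P\left(2\sigma+i\Delta\right)}{P(2\sigma)}\right)^{2}\right).
\]

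By (\ref{pz divergence}), conditioning on $\zeta(\sigma+it)=x$ and letting $x\to 0$ forces $\textnormal{Re}P\left(\sigma+it\right)=\log|x|+O(1)\to-\infty$, so the prefactor $\left|\textnormal{Re}P\left(\sigma+it\right)\right|\to\infty$, while the bracketed factor is always finite and is strictly positive exactly when $\left|\textnormal{Re}P\left(2\sigma+i\Delta\right)/P(2\sigma)\right|<1$. To be careful that the conditioning event fixes $\log\left|\zeta(\sigma+it)\right|$ rather than $\textnormal{Re}P\left(\sigma+it\right)$ itself, I would invoke the law of total variance,
\[
\textnormal{var}\left\{\textnormal{Re}P\left(\sigma+i\left(t+\Delta\right)\right)\,\middle\vert\,\zeta(\sigma+it)=x\right\}\geq E\left\{\textnormal{var}\left\{\textnormal{Re}P\left(\sigma+i\left(t+\Delta\right)\right)\,\middle\vert\,\textnormal{Re}P\left(\sigma+it\right)\right\}\,\middle\vert\,\zeta(\sigma+it)=x\right\},
\]
and note that on the conditioning event $\left|\textnormal{Re}P\left(\sigma+it\right)\right|\geq|\log|x||-C$ for a fixed constant $C$ coming from the bound behind (\ref{last step}), so the inner quantity is at least a positive constant times $|\log|x||-C$, which diverges as $x\to 0$. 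Hence $\textnormal{var}\left\{\textnormal{Re}P\left(\sigma+i\left(t+\Delta\right)\right)\,\middle\vert\,\zeta(\sigma+it)=x\right\}\to\infty$ for the claimed range of $\Delta$.

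Finally I would transfer to $\log\left|\zeta\right|$. Writing, via (\ref{last step}), $\textnormal{Re}P\left(\sigma+i\left(t+\Delta\right)\right)=\log\left|\zeta\left(\sigma+i\left(t+\Delta\right)\right)\right|+B$ with $B=\sum_{n\geq 2}\frac{\mu(n)}{n}\log\left|\zeta\left(n\sigma+ni(t+\Delta)\right)\right|$ a random variable satisfying $|B|\leq\sum_{n\geq 2}\frac{\zeta(n\sigma)-1}{n}<\infty$ for $\sigma>1/2$, Minkowski's inequality applied to the conditional $L^{2}$ norm gives
\[
\sqrt{\textnormal{var}\left\{\log\left|\zeta\left(\sigma+i\left(t+\Delta\right)\right)\right|\,\middle\vert\,\zeta(\sigma+it)=x\right\}}\geq\sqrt{\textnormal{var}\left\{\textnormal{Re}P\left(\sigma+i\left(t+\Delta\right)\right)\,\middle\vert\,\zeta(\sigma+it)=x\right\}}-\sqrt{\textnormal{var}\left\{B\,\middle\vert\,\zeta(\sigma+it)=x\right\}},
\]
and the subtracted term is at most the finite constant above, so the left side diverges along with the first term on the right, which yields (\ref{main result var}). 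The main obstacle is the bookkeeping of the second paragraph: keeping the conditioning consistent between the event $\zeta(\sigma+it)=x$ and the random variable $\textnormal{Re}P\left(\sigma+it\right)$, and making the conditional law-of-total-variance and Minkowski steps rigorous; the substitution into (\ref{gen hyp var asy}) itself is immediate. I would also remark that this computation shows (\ref{main result var}) can fail only at perfect correlation $\left|\textnormal{Re}P\left(2\sigma+i\Delta\right)/P(2\sigma)\right|=1$, where the variance prefactor vanishes.
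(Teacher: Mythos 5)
Your proposal is correct and follows essentially the same route as the paper's proof: read off the divergence from (\ref{gen hyp var asy}) together with (\ref{pz divergence}), then transfer from $\textnormal{Re}P$ to $\log\left|\zeta\right|$ using the boundedness of the M{\"o}bius tail in (\ref{last step}), your Minkowski/triangle-inequality step being the same device as the paper's variance expansion with Cauchy--Schwarz. Your law-of-total-variance remark and the explicit (if slightly optimistic for $\sigma$ near $1/2$, where $\zeta(n\sigma)-1<1$ can fail for small $n$) bound on $B$ are harmless refinements; only $B=O(1)$ is needed, as in the paper.
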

\begin{proof}
We apply (\ref{gen hyp var asy}) with (\ref{cov gen}) and (\ref{always conv}) to note that
\begin{align}
\textnormal{var}\left\{\textnormal{Re}\mathcal{P}\left(\sigma+i\left(t+\Delta\right)\right)|\textnormal{Re}\mathcal{P}\left(\sigma+it\right)\right\} \nonumber \\ \sim \left|\textnormal{Re}\mathcal{P}\left(\sigma+it\right)\right|\frac{1}{\alpha}\sqrt{\frac{\mathcal{P}\left(2\sigma\right)}{2}}\left(1-\left(\frac{\textnormal{Re}\mathcal{P}\left(2\sigma+i\Delta\right)}{\mathcal{P}\left(2\sigma\right)}\right)^{2}\right)
 \label{var gen hyp asy}
\end{align}
as $\left|\textnormal{Re}\mathcal{P}\left(\sigma+it\right)\right| \rightarrow \infty$. The result (\ref{var gen hyp asy}) along with (\ref{always conv}) and (\ref{pz divergence}) shows that 
\begin{equation}
\textnormal{var}\left\{\textnormal{Re}\mathcal{P}\left(\sigma+i\left(t+\Delta\right)\right)|\zeta(\sigma+it)=0\right\} =\infty \label{pzeta var diverge}
\end{equation}
for all $\Delta \in \mathbb{R}$ such that $\left|\textnormal{Re}\mathcal{P}\left(2\sigma+i\Delta\right)/\mathcal{P}(2\sigma)\right|<1$. We next denote (\ref{last step})'s $O(1)$ term as $B\left(\sigma,\tau\right)$ and note that, for an arbitrary event $Y$,
\begin{align}
\textnormal{var}\left\{\textnormal{Re}\mathcal{P}\left(\sigma+i\tau\right)|Y\right\}=\textnormal{var}\left\{\log\left|\zeta\left(\sigma+i\tau\right)\right||Y\right\}+\textnormal{var}\left\{B\left(\sigma,\tau\right)|Y\right\} \nonumber \\ +2\textnormal{cov}\left\{\log\left|\zeta\left(\sigma+i\tau\right)\right|,B\left(\sigma,\tau\right)|Y\right\}. \label{big var 1}
\end{align}
We then apply the Cauchy-Schwartz inequality to (\ref{big var 1})'s last term to write
\begin{align}
\textnormal{var}\left\{\textnormal{Re}\mathcal{P}\left(\sigma+i\tau\right)|Y\right\}=\textnormal{var}\left\{\log\left|\zeta\left(\sigma+i\tau\right)\right||Y\right\}+\textnormal{var}\left\{B\left(\sigma,\tau\right)|Y\right\} \nonumber \\ +2\alpha\sqrt{\textnormal{var}\left\{\log\left|\zeta\left(\sigma+i\tau\right)\right||Y\right\} \textnormal{var}\left\{B\left(\sigma,\tau\right)|Y\right\}}, \label{big var 2}
\end{align}
where $|\alpha|\leq 1$. Since $B\left(\sigma,\tau\right)=O(1)$ it is then clear that 
\begin{equation}
\textnormal{var}\left\{\textnormal{Re}\mathcal{P}\left(\sigma+i\tau\right)|Y\right\}= \infty \implies \textnormal{var}\left\{\log\left|\zeta\left(\sigma+i\tau \right)\right||Y\right\}=\infty. \label{log zeta var diverge}
\end{equation}
The results (\ref{pzeta var diverge}) and (\ref{log zeta var diverge}) complete the proof of (\ref{main result var}).
\end{proof}

\subsection{Extension to L-functions}
L-functions $L\left(s,\chi\right)$ are an important generalization of $\zeta(s)$ that are useful for studying the distribution of primes in arithmetic progressions and related topics. Like $\zeta(s)$, these functions have a Dirichlet series and Euler product representation
\begin{equation}
L\left(s,\chi\right)=\sum_{n=1}^{\infty}\frac{\chi(n)}{n^{s}}=\prod_{p}\left(1-\frac{\chi(p)}{p^{s}}\right)^{-1}, \label{l function}
\end{equation}
where $\chi$ is a given Dirichlet character modulo $M$, which has the definition
\begin{align}
\chi(nm)=\chi(n)\chi(m), \nonumber \\
\chi(n)\begin{cases} =0; & \textnormal{gcd}(n,M)>1 \\ \neq 0; & \textnormal{gcd}(n,M)=1 \end{cases} \nonumber \\
\chi(n+M)=\chi(n) \label{dirichlet character}
\end{align}
If $\chi$ is a principal character, $\chi_{0}$, meaning it has the simple definition  
\begin{equation}
\chi_{0}(n)=\begin{cases} 0; & \textnormal{gcd}(n,M)>1 \\ 1; & \textnormal{gcd}(n,M)=1 \end{cases} \label{principal ch}
\end{equation}
then, similarly to $\zeta(s)$, $L(s,\chi_{0})$ may be extended by analytic continuation to all $s$ in $\mathbb{C}$ except $s=1$ where there is a simple pole. If $\chi$ is not principal, then $L(s,\chi)$ is an entire function. Also like $\zeta(s)$, it is hypothesized (the so-called generalized Riemann hypothesis) that all the the nontrivial zeros of any $L(s,\chi)$ reside on the line $\textnormal{Re}(s)=1/2$. 

We may generalize (\ref{prime zeta sum})-(\ref{prime zeta ac}) for the consideration of L-functions $L\left(s,\chi\right)$. We generalize (\ref{prime zeta sum}) by defining the prime L-function with the series 
\begin{equation}
\mathcal{P}_{\chi}\left(s\right)=\sum_{p}\frac{\chi(p)}{p^{s}} \label{primedirseries}
\end{equation}
where $\chi(p)=e^{-i\theta_{p}}$ for primes $p$ such that $p \nmid M$ and $\chi(p)=0$ for primes $p$ such that $p | M$. We next note that one may take the logarithm of $L(s,\chi)$'s Euler product, Taylor expand, and apply M{\"o}bius inversion to give a generalized version of (\ref{prime zeta ac}) relating L-functions $L(s, \chi)$ and their corresponding series $\mathcal{P}_{\chi}(s)$ for $\textnormal{Re}(s)>0$:
\begin{equation}
\mathcal{P}_{\chi}(s)=\sum_{n=1}^{\infty}\frac{\mu(n)}{n}\log L\left(ns,\chi^{n}\right) \label{prime l ac}
\end{equation}
By the same reasoning as that described in Section \ref{prelims}, the summands of (\ref{primedirseries})'s $\textnormal{Re}\mathcal{P}_{\chi}(\sigma+it)$ are statistically independent under $t$ uniformly distributed in $[a,b]$ with $b-a \rightarrow \infty$. It is also clear from the same reasoning that $\textnormal{Re}\mathcal{P}_{\chi}(\sigma+it)$ and $\textnormal{Re}\mathcal{P}_{\chi}\left(\sigma+i\left(t+\Delta\right)\right)$ have an elliptical joint distribution. Additionally, for $\sigma>1/2$,  $\textnormal{Re}\mathcal{P}_{\chi}(\sigma+it)$'s probability density function satisfies (\ref{density asy}), which is evidence that $\textnormal{Re}\mathcal{P}_{\chi}(\sigma+it)$ and $\textnormal{Re}\mathcal{P}_{\chi}\left(\sigma+i\left(t+\Delta\right)\right)$ with $\sigma>1/2$ have a generalized hyperbolic distribution.

Essentially equivalent reasoning to that in the proof of Lemma \ref{cov gen lemma} shows that, for $t$ uniformly distributed in $[T,2T]$ with $T\rightarrow \infty$
\begin{align}
E\left\{\textnormal{Re}\mathcal{P}_{\chi^{k}}\left(k\left(\sigma+i(t+\Delta)\right)\right)\textnormal{Re}\mathcal{P}_{\chi^{j}}\left(j\left(\sigma+it\right)\right)\right\} \nonumber \\ \rightarrow \begin{cases} \frac{1}{2}\textnormal{Re}\mathcal{P}_{\chi_{0}}\left(k(2\sigma+i\Delta)\right); &  k=j \\ 0; & k \neq j\end{cases} 
\label{pre covar general l functions}
\end{align}
Note the dependence on the principal character $\chi_{0}$. The result (\ref{pre covar general l functions}) gives $\textnormal{Re}\mathcal{P}_{\chi}(\sigma+it)$'s autocovariance function: 
\begin{equation}
R_{\mathcal{P}_{\chi}}\left(\sigma,\Delta\right)=E\left\{\textnormal{Re}\mathcal{P}_{\chi}\left(\sigma+i(t+\Delta)\right)\textnormal{Re}\mathcal{P}_{\chi}\left(\sigma+it\right)\right\}\rightarrow \frac{1}{2}\textnormal{Re}\mathcal{P}_{\chi_{0}}\left(2\sigma+i\Delta\right) \label{cov gen l}
\end{equation}
We can then apply (\ref{pre covar general l functions}) similarly to Lemma \ref{cov gen lemma} in the proof of Lemma \ref{logzeta cov lemma} to give $\log L\left(\sigma+it,\chi\right)$'s autocovariance function:
\begin{align}
R_{\log L_{\chi}}\left(\sigma,\Delta\right)=E\left\{\log L\left(\sigma+i\left(t+\Delta\right),\chi\right)\log L\left(\sigma+it,\chi\right)\right\} \nonumber \\
\rightarrow \frac{1}{2}\sum_{k=1}^{\infty}\frac{\textnormal{Re}\mathcal{P}_{\chi_{0}}\left(k\left(2\sigma+i\Delta\right)\right)}{k^{2}}
\label{log l cov}
\end{align}

We next make the important note that Selberg's central limit theorem (\ref{selberg clt}) also applies for L-functions on the critical line \cite{hsuwong}. $\log \left|L\left(1/2+it,\chi\right)\right|$ and $\log \left|L\left(1/2+i\left(t+i\Delta\right),\chi\right)\right|$ are therefore normally distributed under $t$ uniformly distributed in $[T,2T]$ with $T$ large and $\Delta \in [T-t,2T-t]$. We hence can apply the conditional mean formula (\ref{gen cond mean}) with (\ref{log l cov}) in the same way as we use (\ref{gen cond mean}) and (\ref{logzeta cov}) in the proof of Theorem \ref{thm intro}, noting from (\ref{primedirseries}), (\ref{prime l ac}), and an analogous result to (\ref{over with}) that
\begin{align}
\sum_{k=1}^{\infty}\frac{\mathcal{P}_{\chi_{0}}(k)}{k^{2}}=\mathcal{P}_{\chi_{0}}(1)+O(1)=\log L\left(1,\chi_{0}\right)+\sum_{n=2}^{\infty}\frac{\mu(n)}{n}\log L\left(n,\chi_{0}\right)+O(1)\nonumber \\ =\log L\left(1,\chi_{0}\right)+O(1)=\log\left(\lim_{z\rightarrow 1^{+}}\frac{1}{z-1}\right)+O(1)=\lim_{z\rightarrow 0^{+}}\log\left(\frac{1}{z}\right)+O(1), \label{l divergence}
\end{align}
since all principal L-functions have a simple pole at $s=1$. The divergent quantity (\ref{l divergence}) cancels with $\log\left|L\left(1/2+it,\chi\right)\right|$ when $L\left(1/2+it,\chi\right)=0$, which produces a finite result for $\log\left|L\left(1/2+i\left(t+\Delta\right),\chi\right)\right|$'s conditional mean. Lastly we can apply Selberg's central limit theorem (\ref{selberg clt}) for L-functions along with (\ref{log l cov}) in (\ref{variance factor}) and (\ref{gen cond var}) to show that the conditional variance is $\sim\frac{1}{2}\log\log T$. These results for the conditional mean and variance show that
\begin{align}
\left. \log \left|L\left(\frac{1}{2}+i\left(t+\Delta\right),\chi\right)\right| \middle \vert L\left(\frac{1}{2}+it,\chi\right)=0 \right.\nonumber \\ \overset{d} \to \mathcal{N}\left(-\sum_{k=1}^{\infty}\frac{\textnormal{Re}\mathcal{P}_{\chi_{0}}\left(k\left(1+i\Delta\right)\right)}{k^{2}}, \frac{1}{2}\log\log T\right) \label{critical line l}
\end{align}
for $t$ uniformly distributed in $[T,2T]$ with $\Delta \in [T-t,2T-t]$ and $T\rightarrow \infty$. This gives an analog of Theorem \ref{thm intro} for general L-functions.

Off the critical line we apply the same reasoning as in Theorem \ref{main theorem}'s proof, using (\ref{gen cond mean}) with (\ref{cov gen l}) and noting from (\ref{prime l ac}) that, if $L\left(\sigma+it,\chi\right)=0$, then
$$
\textnormal{Re}\mathcal{P}_{\chi}(\sigma+it)=\log\left|L\left(\sigma+it,\chi\right)\right|+O(1)=\lim_{z\rightarrow 0^{+}}\log(z)+O(1).
$$
This shows that, for $\sigma>1/2$, $t$ uniformly distributed in $[T,2T]$ with $T\rightarrow \infty$, and for all $\Delta \in \mathbb{R}$ such that $0<\left|\textnormal{Re}\mathcal{P}_{\chi_{0}}\left(2\sigma+i\Delta\right)/\mathcal{P}_{\chi_{0}}(2\sigma)\right|<1$:
\begin{equation}
E\left\{\log\left|L\left(\sigma+i\left(t+\Delta\right), \chi\right)\right||L\left(\sigma+it, \chi\right)=0\right\} = \pm \infty \label{main l result mean}
\end{equation}
and $\log\left|L\left(\sigma+i\left(t+\Delta\right), \chi\right)\right|$'s conditional probability density function given the value of $L\left(\sigma+it, \chi\right)$, denoted $p^{*}_{\chi,t+\Delta|t}(x)$, satisfies
\begin{equation}
p^{*}_{\chi,t+\Delta|t}(x)=0 \label{main l result density}
\end{equation}
if $L\left(\sigma+it, \chi\right)=0$ for all $|x|<\infty$. We conclude that, for $\sigma>1/2$ and most values of $\Delta$, the conditional distribution of $\log\left|L\left(\sigma+i\left(t+\Delta\right),\chi\right)\right|$ does not exist when $L\left(\sigma+it, \chi\right)=0$. This gives an analog of Theorem \ref{main theorem} for L-functions off the critical line. 

We may then apply similar reasoning to that in the proofs of Corollary \ref{cor2} and Proposition \ref{var thm} to show that, under a generalized hyperbolic distribution assumption for $\textnormal{Re}\mathcal{P}_{\chi}\left(\sigma+it\right)$ and $\textnormal{Re}\mathcal{P}_{\chi}\left(\sigma+i\left(t+\Delta\right)\right)$, the conditional density $p^{*}_{\chi, t+\Delta|t}(x)$ is also identically zero if $L\left(\sigma+it,\chi\right)=0$ at distances $\Delta$ where $\textnormal{Re}\mathcal{P}_{\chi_{0}}\left(2\sigma+i\Delta\right)=0$ and additionally 
\begin{equation}
\textnormal{var}\left\{\log\left|L\left(\sigma+i\left(t+\Delta\right), \chi\right)\right||L\left(\sigma+it,\chi\right)=0\right\}= \infty
\end{equation}
for all $\Delta \in \mathbb{R}$ such that $\left|\textnormal{Re}\mathcal{P}_{\chi_{0}}\left(2\sigma+i\Delta\right)/\mathcal{P}_{\chi_{0}}(2\sigma)\right|<1$. These results show that, similarly to the zeta function, the conditional distribution of an L-function's magnitude likely does not exist for any distances above or below nontrivial zeros off the critical line. 

\section{Conclusion}

The pole at $\zeta(1)$ is critically important to why $\log\left|\zeta\left(1/2+i\left(t+\Delta\right)\right)\right|$'s conditional distribution exists when $\zeta\left(1/2+it\right)=0$. Firstly, it provides an important cancellation in the conditional expectation formula that produces a convergent result around zeta zeros. Secondly, the pole-induced divergence of $\log\left|\zeta\left(1/2+it\right)\right|$'s variance, i.e., the fact that $\textnormal{var}\left\{\log\left|\zeta\left(1/2+it\right)\right|\right\} \sim \frac{1}{2}\log\log T$ for $t\in [T,2T]$ produces the central limit theorem and normal distribution on the critical line. The normal distribution implies that $\log\left|\zeta\left(1/2+i\left(t+\Delta\right)\right)\right|$'s conditional variance and higher order conditional moments do not depend on the value of $\zeta\left(1/2+it\right)$.

In contrast, with $\sigma>1/2$, there is no corresponding pole at $\zeta(2\sigma)$, so there is no stabilizing cancellation in $\log\left|\zeta\left(\sigma+i\left(t+\Delta\right)\right)\right|$'s conditional expectation formula and no central limit theorem providing a normal distribution. As a result the conditional moments and distribution of $\log\left|\zeta\left(\sigma+i\left(t+\Delta\right)\right)\right|$ do not exist if $\zeta\left(\sigma+it\right)=0$ except possibly for specific distances $\Delta$ where the correlation between $\textnormal{Re}\mathcal{P}\left(\sigma+it\right)$ and  $\textnormal{Re}\mathcal{P}\left(\sigma+i\left(t+\Delta\right)\right)$ vanishes. Further reseach is needed on these distances, however, we have shown that under relatively mild assumptions, the conditional distribution does not exist for these distances as well. All of this reasoning generalizes to L-functions.

We have thus shown that, although zeta and L-function magnitudes have well-understood vertical statistical structure around zeros on the critical line, they cannot have vertical statistical structure at most distances around nontrivial zeros off the critical line. This provides a novel, probabilistic explanation for why the Riemann hypothesis is likely to be true. Our proofs are relatively simple, involving the statistical properties of the log-Euler product and related prime sums, the conditional distribution structure of elliptical random variables, and the elementary pole structure of zeta and L-functions. Further research is needed to understand if these results could serve as a starting point for a proof of the Riemann hypothesis.

\appendix
\section{On statistical independence}
\label{app independence}

The characteristic function of a random variable $X$ is defined
\begin{equation}
\varphi_{X}(\lambda)=E\left\{e^{i\lambda X}\right\}. \label{chf}
\end{equation} 
We suppose $t$ is uniformly distributed in $[a,b]$ with $b-a \rightarrow \infty$ and consider the following sum over some set of primes $p$: 
\begin{equation}
X(t)=\sum_{p}a_{p}e^{-i\left(\alpha_{p}t\log p+\phi_{p}\right)}, \label{primedirseries m}
\end{equation}
where $a_{p}, \phi_{p} \in \mathbb{R}$ and $\alpha_{p} \in \mathbb{Q}$. We substitute (\ref{primedirseries m})'s real part into (\ref{chf}) to write
\begin{equation}
\varphi_{X(t)}(\lambda)=E\left\{ \prod_{p}\exp\left( i \lambda a_{p}\cos\left(\alpha_{p}t \log p+\phi_{p}\right)\right) \right\}. \label{app1step00}
\end{equation}
We then expand (\ref{app1step00}) using the Bessel function identity 
\begin{equation}
e^{ix \cos y}=\sum_{n=-\infty}^{\infty} i^{n}J_{n}\left(x\right)e^{i n y}
\label{besselidcos}
\end{equation}
where $J_{n}(.)$ is the $n$th-order Bessel function of the first kind. This gives
\begin{align}
\begin{split}
\varphi_{X(t)}\left(\lambda \right)=E\left\{\sum_{n_{1},n_{2},...}\left(i^{n_{1}+n_{2}+...}J_{n_{1}}\left(\lambda a_{p_{1}}\right)J_{n_{2}}\left(\lambda a_{p_{2}}\right)... \right. \right. \\ \left. \left. \times e^{i\left(n_{1}\phi_{p_{1}}+n_{2}\phi_{p_{2}}+...\right)}e^{i t\left(n_{1}\alpha_{p_{1}}\log p_{1}+n_{2}\alpha_{p_{2}}\log p_{2}+...\right)}\right)\right\} \end{split}
\label{big}
\end{align}
The exponential terms on (\ref{big})'s far right-hand side are unit circle rotations with $t$. Therefore taking the expected value will cause all terms to vanish except those for which 
\begin{equation}
n_{1}\alpha_{p_{1}}\log p_{1}+n_{2}\alpha_{p_{2}}\log p_{2}+n_{3}\alpha_{p_{3}}\log p_{3}+...=0. \label{zero}
\end{equation}
However, by unique-prime-factorization, the $\log p$'s are linearly independent over the rational numbers. Therefore the only solution to (\ref{zero}) is given by 
\begin{equation}
n_{1}=n_{2}=n_{3}=...=0. \label{soln}
\end{equation}
This simplifies (\ref{big}) to give 
\begin{equation}
\varphi_{X(t)}\left(\lambda \right)=\prod_{p} J_{0}\left(\lambda a_{p}\right). \label{result}
\end{equation}
We next note from (\ref{chf}) and (\ref{besselidsin}) that the characteristic function for a single summand in (\ref{primedirseries m})'s real part, $a_{p}\cos(\alpha_{p}t \log p+\phi_{p})$, is given by 
\begin{equation}
\varphi_{p}\left(\lambda\right)=J_{0}\left(\lambda a_{p}\right). \label{littleresult}
\end{equation}
Therefore, by (\ref{result}) and (\ref{littleresult}), 
\begin{equation}
\varphi_{X(t)}\left(\lambda \right)=\prod_{p} \varphi_{p}\left(\lambda\right). \label{independence}
\end{equation}
This shows that (\ref{primedirseries m})'s summands are independent. Essentially equivalent reasoning using the identity
\begin{equation}
e^{ix \sin \phi}=\sum_{n=-\infty}^{\infty}J_{n}\left(x\right)e^{i n\phi} \label{besselidsin}
\end{equation}
gives equivalent results for (\ref{primedirseries m})'s imaginary part. This completes the proof sketch. For further details and complete proof, apply results in \cite{laurincikas} (pg. 36, Cor. 5.2 pg. 38, Eqn. 6.4 pg. 41, Cor. 6.7 pg. 43, Cor. 6.8 pg. 44).

\section{Uncorrelated shifts}
\label{appb}
We consider $\sigma>1$ and apply the Euler product to write
\begin{align}
\log\left|\zeta\left(\sigma+i\left(t+\Delta\right)\right)\right|\textnormal{Re}\mathcal{P}\left(\sigma+it\right)=\sum_{k=1}^{\infty}\frac{\textnormal{Re}\mathcal{P}\left(k\left(\sigma+i\left(t+\Delta\right)\right)\right)\textnormal{Re}\mathcal{P}\left(\sigma+it\right)}{k} \label{covar trick 1}
\end{align}
We then note that the sum of the absolute values of (\ref{covar trick 1})'s summands has the upper bound 
\begin{equation}
\mathcal{P}(\sigma)\sum_{k=1}^{\infty}\frac{\mathcal{P}(k\sigma)}{k}<\infty, \label{covar trick 2 upper bound}
\end{equation}
which shows that (\ref{covar trick 1}) satisfies (\ref{ft cond}). We may therefore apply the expectation to (\ref{covar trick 1}) term-by-term. We do so under $a=T$, $b=2T$, and $T\rightarrow \infty$ and apply Lemma \ref{cov gen lemma} to show that the expected value of each term in (\ref{covar trick 1}) vanishes except the term corresponding to $k=1$. By (\ref{cov gen}), the expected value of the $k=1$ term gives
\begin{equation}
E\left\{\log\left|\zeta\left(\sigma+i\left(t+\Delta\right)\right)\right|\textnormal{Re}\mathcal{P}\left(\sigma+it\right)\right\}\rightarrow \frac{1}{2}\textnormal{Re}\mathcal{P}\left(2\sigma+i\Delta\right), \label{covar trick 4}
\end{equation}
which, as was noted after the proof of Lemma \ref{cov gen lemma}, has a domain extending to $\sigma>0$. Therefore, if $\textnormal{Re}\mathcal{P}\left(2\sigma+i\Delta\right)=0$, then $\log\left|\zeta\left(\sigma+i\left(t+\Delta\right)\right)\right|$ and $\textnormal{Re}\mathcal{P}\left(\sigma+it\right)$ are uncorrelated.

\end{document}